\documentclass{article}
\usepackage[utf8]{inputenc} 
\usepackage{amsmath,amssymb} 
\usepackage{amsthm}
\newtheorem*{note}{Note}
\newtheorem{theorem}{Theorem}
\usepackage{natbib}
\title{Necessary and Sufficient Conditions for Proving Choice in Zermelo-Fraenkel Set Theory}
\author{Valentyn Khokhlov}
\date{15 February 2026}

\begin{document}

\maketitle

\begin{abstract}
This paper introduces an alternative approach to proving the existence of choice functions for specific families of sets within Zermelo-Fraenkel set theory (ZF) without assuming any form on the Axiom of Choice (AC). Traditional methods of proving choice, when it is possible without AC, are based on explicit constructing a choice function, which relies on being able to identify canonical elements within the sets. 
Our approach, instead, employs the axiom schema of separation. We begin by considering families of well-ordered sets, then apply the schema of separation twice to build a set of possible candidates for the choice functions, and, finally, prove that this set is non-empty. This strategy enables proving the existence of choice function in situations where canonical elements cannot be identified explicitly. 
We then extend our method beyond families of well-ordered sets to families of sets, over which partial orders with a least element exist. After exploring possibilities for further generalization, we establish a necessary and sufficient condition: in ZF, without assuming AC, a choice function exists for a non-empty family if and only if each set admits a partial order with a least element. Finally, we demonstrate how this approach can be used to prove the existence of choice functions for families of contractible and path-connected topological spaces, including hyper-intervals in $\mathbb{R}^n$, hyper-balls, and hyper-spheres.

\paragraph{Keywords} choice function, axiom of choice, Zermelo-Fraenkel, set theory
\end{abstract}

\section{Introduction}
The Zermelo-Fraenkel set theory (ZF) is widely recognized as the most common foundation of mathematics. Its axiomatic framework was first established in \cite{Zermelo1908} and subsequently refined in numerous works, with one of the most comprehensive formulation presented in \cite{Fraenkel1973}. Nowadays it is common to add the axiom of choice (AC) to this theory in order to build a complete mathematical framework (ZFC), however  historically AC was perceived as controversial due to the paradoxes it can generate. As demonstrated in \cite{Cohen1964}, AC is independent of the other ZF axioms. The idea of AC predates ZF itself: Zermelo first suggested the idea of choice in \cite{Zermelo1904} while attempting to prove that every set can be well-ordered. Some stronger or weaker forms of AC has been proposed, such as axiom of global choice, axiom of dependable choice, and axiom of countable choice. Still, it is generally accepted that some kind of this axiom is necessary for building a solid foundation for mathematics.

The axiom of choice can be formulated in several equivalent ways, and in this paper we will use the following expression \eqref{AC}, which literally states ``\textit{for every non-empty family of non-empty sets there exists a function that maps each set of that family into an element of that set}":
\begin{equation}\label{AC}
\begin{split}
\forall{A_S} \Big[&\big(A_S \ne \emptyset \land \forall{A \in A_S} ( A \ne \emptyset)\big) \implies \\& \exists {f_c \in \mathcal{P}(A_S \times \bigcup_{A \in A_S}{A} ) } \big[ \forall{A \in A_S} \exists! {a \in A} \big((A,a) \in f_c\big) \big]  \Big]
\end{split}
\end{equation}

As noted in \cite{Hrbacek1999}, building on the original works of Zermelo, it can be shown within ZF alone---without assuming AC---that the power set of any well-ordered set admits a choice function. Moreover, the authors also demonstrate that every finite familiy $A_S$ possess a choice function without invoking AC. However, both of those proofs are based on constructing a choice function in a specific way. In fact, they essentially depend on being able to construct a function that picks the $\prec$-least element for a particular $\prec$ well-ordering on the given set. We refer to such proofs as ``constructive", since they explicitly construct a specific function based on  properties derivable from ZF alone.

Using the ``constructive" approach, it can be shown a choice function exists (i.e. can be constructed) if each set in the family has some natural ordering or a canonical element. For example, statement \eqref{AC} can be proved as a theorem in ZF if $A_S$ in finite, or even if it contains countably many subsets of $\mathbb{N}$, since the natural well-ordering on $\mathbb{N}$ can be used to uniquely determine the least element of each set. That suggests we might be able to prove choice for any families of countable sets without assuming AC. However, when $A_S$ contains sets with no natural well-ordering, even though some well-orders may exist, selecting a particular one among them will require some form of choice. Under AC, any set can be well-ordered by choosing its elements sequentially. However, the opposite is not automatically true: if we have got a family of well-ordered sets, not all of which are countable, that does not imply we can assume the existence of a choice function without AC when following a ``constructive" approach.

Therefore, the following question remains open, at least within the publicly available body of knowledge: if $A_S$ is a family of well-ordered sets, not all of which are countable (or more generally, when we cannot develop a ``constructive" proof for the reasons discussed above), can statement \eqref{AC} be proved from ZF alone, without assuming AC? This paper proposes an alternative to the ``constructive" approach, aiming to prove \eqref{AC} as a theorem in ZF under certain conditions. We develop proofs for specific cases involving well-ordered sets and partially ordered sets with a least element, and discusses possible extensions to some classes of unordered sets. The paper is structured as follows: in Section 2 we specify the axiomatic basis for our proofs, Section 3 presents a non-constructive proof of statement \eqref{AC} as a theorem when every $A$ is well-ordered, in Section 4 we explore generalizations to partially ordered sets with a least element and to unordered sets over which relations with a unique element exist, and Section 5 provides the necessary and sufficient conditions under which the existence of a choice function can be proved within ZF without AC.

\section{Axiomatic Basis}

In this paper we base our work on the Zermelo-Fraenkel set theory (ZF), so all objects we  deal with are sets. For the sake of clarity we will tend to use capital letters such as $A, B, X, Y$ when emphasizing that an object is primarily considered as a collection of other objects. Conversely, lowercase letters like $a, b, x, y$ will refer to elements within sets. The capital letter $P$ will be used to denote sets of ordered pairs, while $R$ and $Q$ will specifically denote relations. The lowercase letter $f$ will be reserved for functions.

This paper is based entirely on the following axioms of ZF, all of which were present in \cite{Zermelo1908}:

1. Axiom of extensionality
\begin{equation}\label{AExt}
\forall{A} \forall{B} \forall{x} \big( (x\in A \iff x\in B) \implies A=B \big)
\end{equation}

2. Axiom of pairing
\begin{equation}\label{APair}
\forall{x} \forall{y} \exists{A} \forall{z} \big( z \in A \iff (z=x \lor z=y) \big)
\end{equation}
\textit{Note:} When $x=y$, this axiom allows the creation of singletons, i.e. sets containing exactly one element: $A = \{x\}$.

3. Axiom of union
\begin{equation}\label{AUni}
\forall{A_S} \exists{A_U} \forall{a} \big( a \in A_U \iff \exists{A}(A \in A_S \land a \in A) \big)
\end{equation}

4. Axiom of power set
\begin{equation}\label{APow}
\forall{A} \exists{\mathcal{P}(A)} \forall{B} \big( B \in \mathcal{P}(A) \iff \forall{x}(x \in B \implies x \in A) \big)
\end{equation}
\textit{Note:} $B \subseteq A \iff \forall{x}(x \in B \implies x \in A)$.

5. Axiom schema of separation
\begin{equation}\label{ASep}
\forall{w_1,w_2,...,w_n} \forall{A} \exists{B} \forall{x} \big( x \in B \iff (x \in A \land \varphi(x,w_1,w_2,...,w_n,A)) \big)
\end{equation}
Note: $\varphi(x,w_1,w_2,...,w_n,A)$ is any well-formulated formula (wff), in which $B$ is not free.

Given these axioms, we can define: 
\begin{itemize}
    \setlength{\itemsep}{0pt}
    \setlength{\parskip}{0pt}
    \setlength{\parsep}{0pt}
    \item an ordered pair $(x,y)$,
    \item the Cartesian product of two sets $A \times B$ as the set of all ordered pairs $(x,y)$ where $x\in A,y\in B$,
    \item a function $f:A \rightarrow B$ as a subset $f \subseteq A \times B$ such that $\forall{x\in A}\exists!{y\in B}(f(x)=y \iff (x,y)\in f)$,
    \item a relation $R$ over $A$ as a subset $R \subseteq A \times A$.
\end{itemize}

When dealing with order relations, we will use the notation $xRy$ interchangeably with $(x,y) \in R$, and we will call a relation $R$ over $A$ a well-order if 
\begin{equation}\label{WellOrd}
\begin{split}
\forall{x,y,z\in A}\big[ \big(xRy \lor yRx) & \land \big((xRy \land yRx) \implies x=y\big) \land \\ \big((xRy \land yRz) \implies xRz\big) \big] & \land \forall{B\subseteq A}\big(B\ne \emptyset \implies \exists{m\in B}\forall{b\in B}(mRb) \big)
\end{split}
\end{equation}

In this paper we can limit ourselves with much less restrictive orders, for example we will call a relation $R$ over $A$ a partial order with a least element if
\begin{equation}\label{PartOrdLeast}
\begin{split}
\forall{x,y,z\in A}\big[ \big(xRx) & \land \big((xRy \land yRx) \implies x=y\big) \land \\ \big((xRy \land yRz) & \implies xRz\big) \big] \land \big(A\ne \emptyset \implies \exists{m \in A}\forall{b\in A}(mRb) \big)
\end{split}
\end{equation}

Note that because $R$ in \eqref{WellOrd} and \eqref{PartOrdLeast} is anti-symmetric, the least element $m$ is unique for each set or subset (i.e. we can write $\exists!m$ instead of $\exists m$).

\section{The Theorem of Choice for Families of Well-Ordered Sets}

\begin{theorem}[Theorem of Choice, Well-Ordered Case]\label{TCwo}
 For every non-empty family of non-empty well-ordered sets there exists a function that maps each set of that family into an element of that set:
\begin{equation}\label{TC}
\begin{split}
 \forall{A_S} \Big[&\big(A_S \ne \emptyset \land \forall{A \in A_S} ( A \ne \emptyset \land \mathcal{R}_A \ne \emptyset\big) \implies \\& \exists {f_c \in \mathcal{P}(A_S \times \bigcup_{A \in A_S}{A} ) } \big[ \forall{A \in A_S} \exists! {a \in A} \big((A,a) \in f_c\big) \big]  \Big]
\end{split}
\end{equation}
where every ${R_A \in \mathcal{R}_A}$ is a well-order relation over $A$ as defined by \eqref{WellOrd}.

\begin{note}
\eqref{TC} is essentially the same statement as \eqref{AC} with the additional condition that there should exist at least one well-order $R_A$ for each $A$. Since there are no other changes, \eqref{TC} is applicable to any $A_S$, including uncountable families.   
\end{note}

\begin{proof}
Step 1. Combining the fact that each $A \in A_S$ admits a well-order with the fact every well-order has a unique least element, which follows from (7), 
\begin{equation}\label{a_min}
\forall{A \in A_S} \exists{R_A} \exists!{a^*_{R_A} \in A} \forall{b \in A} \big(a^*_{R_A}Rb\big)
\end{equation}

Step 2. Applying \eqref{AUni} to $A_S$, we can build $A_U = \bigcup_{A\in A_S}{A}$.

Step 3. Using \eqref{APair}, we can build a singleton $\{A\}$ for every $A \in A_S$, and with \eqref{APow} and \eqref{AUni} we can build the Cartesian product $P_A = \{A\} \times A$. Note that $P_A$ is unique for each $A$ and contains ordered pairs $(A,a)$ for every $a\in A$.

Step 3.1. Using \eqref{APow} and \eqref{AUni}, we can build two unions:
$$U_1=\bigcup_{A\in A_S}{\mathcal{P}(A \times A)}, U_2=\bigcup_{A\in A_S}{\mathcal{P}(P_A \times P_A)}$$

Step 3.2. For every well-order $R_A$ over $A\in A_S$ we can build a corresponding well-order $R_{P_A}$ over $P_A$ as follows:
$$\forall{A \in A_S}\exists R_{P_A} \forall{x} \big[ x\in R_{P_A} \iff (x\in P_A\times P_A) \land \exists{R_A}\in\mathcal{R}_A\big( x_{1,2} R_A x_{2,2} \big) \big]$$
where indices are used to denote elements of the ordered pair $x=(x_1,x_2)=\big((x_{1,1},x_{1,2}),(x_{2,1},x_{2,2})\big)$.

Step 3.3. It is evident that there exists a bijection between the orders $R_A$ and $R_{P_A}$, defined as follows (using set-builder notation for clarity):
\begin{equation}\label{p_bijection} 
\begin{split} 
p(R_A) = \big\{ \big((A,a),(A,b)\big) \in P_A\times P_A | a R_A b\big\}, \\
p^{-1}(R_{P_A}) = \big\{ (a,b) \in A\times A | (A,a) R_{P_A} (A,b)\big\}   
\end{split}
\end{equation}

Step 4. Using \eqref{ASep}, we can separate from $U_2$ its subset $Q_S$ containing only those sets of pairs $\big((A,a),(A,b)\big)$ for each $A$, for which there exists a well-order $R_A$ such that $a R_A b$:
\begin{equation}\label{QS_def}
\exists{Q_S}\forall{Q} \Big[ Q\in Q_S \iff \big(Q\in U_2 \land \varphi_1(Q) \big)\Big]
\end{equation}
where
\begin{equation}\label{phi1_def}
\varphi_1(Q) = \forall{A\in A_S} \exists{R_A} \forall{a,b\in A} \big[ \big((A,a),(A,b)\big)\in Q \iff a R_A b \big]
\end{equation}

Step 4.1. Define $Q_A = Q \cap (P_A \times P_A)$, which is effectively a restriction of $Q$ to those pairs related to the set $A$:
$$ Q_A = \big\{ \big((X,x),(Y,y)\big) \in Q | X=A \land Y=A \big\} $$
Thus, we've created a partition: $Q = \bigcup_{A\in A_S}{Q_A}$ and $A\ne B \implies Q_A \cap Q_B=\emptyset$.

Step 4.2 Notice that in \eqref{phi1_def}, the expression $\big((A,a),(A,b)\big)\in Q$ is equivalent to $\big((A,a),(A,b)\big)\in Q_A$. Therefore, we can rewrite \eqref{phi1_def} as
$$ \varphi_1(Q) = \forall{A\in A_S} \exists{R_A} \forall{a,b\in A} \big[ \big((A,a),(A,b)\big)\in Q_A \iff a R_A b \big]. $$
Using the bijection \eqref{p_bijection}, this is equivalent to
$$ \varphi_1(Q) = \forall{A\in A_S} \exists{R_A} \forall{a,b\in A} \big[ \big((A,a),(A,b)\big)\in Q_A \iff \big((A,a),(A,b)\big)\in p(R_A) \big], $$
and, applying \eqref{AExt}, we can further rewrite it as
\begin{equation}\label{phi1_def_alt}
\varphi_1(Q) = \forall{A\in A_S} \exists{R_A} \big( Q_A=p(R_A) \big)
\end{equation}
This indicates that $p^{-1}(Q_A)$ equals to some $R_A$, i.e. is a well-order over $A$, and every element $Q \in Q_S$ is a union of well-orders that is formed by combining exactly one well-order $R_{P_A}$ for each $A \in A_S$.

Step 5. Let's now prove that the following expression is true:
\begin{equation}\label{QA_has_least_element}
\forall{Q\in Q_S} \forall{A\in A_S} \exists{m \in A} \forall{b\in A} \big[ \big((A,m),(A,b)\big)\in Q \big]
\end{equation}

The simplest way involves noting that $\big((A,m),(A,b)\big)\in Q$ is equivalent to $\big((A,m),(A,b)\big)\in Q_A$, and since $Q_A$ is a well-order, it has a least element $(A,m)$. Therefore, the least element exists, as required.

Alternatively, assume that for some $Q \in Q_S$
$$ \exists{A\in A_S} \forall{m \in A} \lnot \forall{b\in A} \big[ \big((A,m),(A,b)\big)\in Q \big], $$
then we can replace $Q$ its subset $Q_A$:
$$ \exists{A\in A_S} \forall{m \in A} \lnot \forall{b\in A} \big[ \big((A,m),(A,b)\big)\in Q_A \big], $$
and since this is true for every $m$, it also holds for the least element of any well-order over $A$, including that particular $R_A$ that equals to $p^{-1}(Q_A)$:
$$ \exists{A\in A_S} \forall{R_A \in A} \forall{a^{*}_{R_A}} \lnot \forall{b\in A} \big[ \big((A,a^{*}_{R_A}),(A,b)\big)\in p(R_A) \big], $$
$$ \exists{A\in A_S} \forall{R_A \in A} \forall{a^{*}_{R_A}} \lnot \forall{b\in A}  \big(a^{*}_{R_A} R_A b\big), $$
contradicting the existence of a least element $a^{*}_{R_A}$ as established in \eqref{a_min}.

Step 5.1. Using a similar reasoning, we can also prove that $Q_S$ is non-empty. Assume that $Q_S = \emptyset$, i.e. $\forall{Q\in U_2}\big[Q \notin Q_S\big]$, then, using \eqref{QS_def} and \eqref{phi1_def_alt},

$$ \forall{Q\in U_2}\big[ \lnot \forall{A\in A_S} \exists{R_A} \big( Q_A=p(R_A) \big) \big], $$
\begin{equation}\label{contradict_QS_nonempty}
\forall{Q\in U_2}\big[ \exists{A\in A_S} \forall{R_A} \big( p^{-1}(Q_A) \ne R_A \big) \big],   
\end{equation}
but since $p^{-1}$ is a bijection, and all possible sets of pairs on $P_A \times P_A$ are covered (since the formula holds for any $Q \in U_2$, and that includes every $Q_A$ for each $A$), then $p^{-1}(Q_A)$ spans the entire power set $\mathcal{P}(A\times A)$, and none of its elements are equal to some well-order $R_A$. Thus there are no well-orders over $A$. Therefore, our initial assumption made must be false, and $Q_S \ne \emptyset$.

Alternatively, we can note that \eqref{contradict_QS_nonempty} is equivalent to $$\forall{R'\in U_1} \big[ \exists{A\in A_S} \forall {R_A \in \mathcal{R}_A} \big( R'_A \ne R_A \big) \big],$$ or $\forall{R'\in U_1}\big[ \exists{A\in A_S} \big( R'_A \notin \mathcal{R}_A) \big]$, where $R'_A = R' \cap (A \times A)$ is a restriction of $R'$ to $A$. Since $U_1$ by construction contains all possible orderings for every $A \in A_S$, but none of its elements belongs to $\mathcal{R}_A$ for some $A$, there are no orderings in $\mathcal{R}_A$, i.e. $\exists{A \in A_S}(\mathcal{R}_A = \emptyset)$, which contradicts to the conditions of the theorem, so our initial assumption must be false, and $Q_S \ne \emptyset$.

Step 6. Using \eqref{ASep}, we can separate from $\mathcal{P}(A_S\times A_U)$ its subset $F_c$ containing only those sets of pairs $(A,m)$ for each $A$, in which $m$ satisfies \eqref{QA_has_least_element} for some $Q\in Q_S$:
\begin{equation}\label{Fc_def}
\exists{F_c}\forall{f_c} \Big[ f_c\in F_c \iff \big(f_c\in \mathcal{P}(A_S\times A_U) \land \varphi_c(f_c) \big)\Big]
\end{equation}
where, for $Q\in Q_S, A\in A_S$,
\begin{equation}\label{phic_def}
\varphi_c(f_c) = \exists{Q}\forall{A} \forall{m\in A} \Big[ (A,m)\in f_c \iff \forall{b\in A}\big[\big((A,m),(A,b)\big)\in Q\big]\Big]
\end{equation}
Note that on step 5.1 we demonstrated that $m$ satisfies \eqref{QA_has_least_element} if and only if $m = a^{*}_{R_A}$ for $R_A=p^{-1}\big(Q\cap(P_A\times P_A)\big)$, therefore \eqref{phic_def} can be rewritten as follows:
$$\varphi_c(f_c) = \exists{Q\in Q_S}\forall{A\in A_S}\forall{m\in A}\Big[ (A,m)\in f_c \iff m=a^{*}_{p^{-1}(Q_A)}\Big],$$
\begin{equation}\label{phic_def_alt}
\varphi_c(f_c) = \exists{Q\in Q_S}\forall{A\in A_S} \Big[ f_c(A)=a^{*}_{p^{-1}(Q_A)}\Big].
\end{equation}

Step 6.1. From \eqref{phic_def_alt} and \eqref{QA_has_least_element} it immediately follows that for every $Q\in Q_S$ and every $A\in A_S$ such an element $m=a^{*}_{p^{-1}(Q_A)}$ exists, so each $A$ is mapped to at least one element $m$. And, since a well-order has only one minimal element, this mapping is unique. Therefore, every $f_c \in F_c$ is a function.

Step 6.2. Let's prove that $F_c \ne \emptyset$. We can use the same logic as on step 5.1, first assuming that $F_c = \emptyset$, then $\forall{f_c\in \mathcal{P}(A_S\times A_U)}\lnot\varphi_c(f_c)$, and, using \eqref{phic_def_alt},
$$\forall{f_c\in \mathcal{P}(A_S\times A_U)}\forall{Q\in Q_S}\exists{A\in A_S} \Big[ f_c(A) \ne a^{*}_{p^{-1}(Q_A)}\Big].$$
Since this must hold for every pair $(A,a)\in A_S\times A_U$, it implies there is no such $m\in A$ that $(A,m)\in A_S\times A_U \land m=a^{*}_{p^{-1}(Q_A)}$:
$$\forall{Q\in Q_S}\exists{A\in A_S}\forall{m\in A} \Big[ (A,m)\notin A_S\times A_U \lor m\ne a^{*}_{p^{-1}(Q_A)}\Big],$$
$$\forall{Q\in Q_S}\exists{A\in A_S}\forall{m\in A} \Big[ m\ne a^{*}_{p^{-1}(Q_A)}\Big],$$
and, since $Q_S \ne \emptyset$, the statement above implies that 
$$\exists{Q\in Q_S}\exists{A\in A_S}\forall{m\in A} \Big[ m\ne a^{*}_{p^{-1}(Q_A)}\Big],$$
which contradicts to \eqref{QA_has_least_element}. Hence, our initial assumption $F_c=\emptyset$ must be false.

Finally, since $F_c\ne \emptyset$, the statement \eqref{TC} holds:
$$\exists{f_c}\in\mathcal{P}(A_S\times A_U)\forall{A \in A_S} \exists! {a \in A} \big((A,a) \in f_c\big)$$

\end{proof}

\end{theorem}

\section{The Theorem of Choice under Relaxed Assumptions and Its Implications}

In the previous section, we limited our consideration to families of well-ordered sets. However, in the proof of \eqref{TC} we only relied on two properties from \eqref{WellOrd}: the existence of a least element and anti-symmetry. Both properties were used in a very relaxed form. For example, anti-symmetry was only needed to ensure the least element is unique (i.e., using $\exists!$ in place of $\exists$), which was crucial for building the $f_c$ function, because it should map each set $A$ into one and only one of its elements. We did not require anti-symmetry for any other purpose. Similarly, we only needed the fact that each set $A$ contained a least element, but we never required any subset of $A$ to have a least elements as well. Our proof would still hold if a set itself has a least element, while some of its subsets do not. Therefore, we can relax the assumptions of \textbf{Theorem \ref{TCwo}} and formulate its generalized variants.

One way to generalize \textbf{Theorem \ref{TCwo}} is to define $\mathcal{R}_A$ as a set of partial orders with a least element, as specified by \eqref{PartOrdLeast}. In this case we do not require every subset of $A$ to have a least element, only the set itself. Anti-symmetry ensures this element in unique. Consequently, we can formulate the following generalization, and the same proof as in the previous section applies:

\begin{theorem}[Theorem of Choice, Partially Ordered Case]\label{TCpo}
 For every non-empty family of non-empty partially ordered sets with a least element there exists a function that maps each set of that family into an element of that set:
\[
\begin{split}
 \forall{A_S} \Big[&\big(A_S \ne \emptyset \land \forall{A \in A_S} ( A \ne \emptyset \land \mathcal{R}_A \ne \emptyset\big) \implies \\& \exists {f_c \in \mathcal{P}(A_S \times \bigcup_{A \in A_S}{A} ) } \big[ \forall{A \in A_S} \exists! {a \in A} \big((A,a) \in f_c\big) \big]  \Big]
\end{split}
\]
where every ${R_A \in \mathcal{R}_A}$ is a partial order with a least element over $A$ as defined by \eqref{PartOrdLeast}.
\end{theorem}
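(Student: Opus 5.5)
The plan is to rerun the proof of \textbf{Theorem \ref{TCwo}} step by step, with ``well-order'' replaced everywhere by ``partial order with a least element'' in the sense of \eqref{PartOrdLeast}. Before starting, I would check which clauses of \eqref{WellOrd} that proof actually used. There are exactly two. First, each $A$ has a least element $a^*_{R_A}$ (Step 1 and Step 5). Second, this element is unique, which is what makes each $f_c$ single-valued (Step 6.1). Both are available for partial orders with a least element: existence is the last clause of \eqref{PartOrdLeast}, and uniqueness follows from anti-symmetry, as the paper notes after \eqref{PartOrdLeast}. Totality and the least-element property for proper subsets are never invoked, so dropping them costs nothing.

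Concretely, I would proceed in this order.
\begin{enumerate}
\item Restate \eqref{a_min} for $R_A\in\mathcal{R}_A$, now meaning partial orders with a least element.
\item Build $A_U$, $P_A=\{A\}\times A$, $U_1$ and $U_2$ exactly as in Steps 2--3.1.
\item Transport each $R_A$ to $R_{P_A}$ via the map $p$ of \eqref{p_bijection}. I would note that $p$ and $p^{-1}$ preserve reflexivity, anti-symmetry, transitivity and the existence of a least element, because $p$ merely relabels $a$ as $(A,a)$.
\item Define $Q_S\subseteq U_2$ by separation with $\varphi_1$ as in \eqref{QS_def}, with $R_A$ now ranging over $\mathcal{R}_A$. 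Obtain the reformulation \eqref{phi1_def_alt} via the restrictions $Q_A=Q\cap(P_A\times P_A)$, using Step 4.1's observation that these restrictions partition $Q$.
\end{enumerate}

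Next I would verify the analogue of \eqref{QA_has_least_element}. For $Q\in Q_S$, the set $p^{-1}(Q_A)$ is some $R_A\in\mathcal{R}_A$, so it has a least element $m$, and $((A,m),(A,b))\in Q$ holds for all $b\in A$. I would then repeat Step 5.1 to get $Q_S\neq\emptyset$ from the hypothesis $\mathcal{R}_A\neq\emptyset$ for every $A$. After that, I would separate $F_c\subseteq\mathcal{P}(A_S\times A_U)$ by $\varphi_c$ as in \eqref{Fc_def}--\eqref{phic_def}. Each $f_c\in F_c$ is a function because the least element of a partial order is unique: if $m$ and $m'$ are both least, then $mRm'$ and $m'Rm$, so $m=m'$. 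This is the only place anti-symmetry enters. Finally, Step 6.2 carries over verbatim to give $F_c\neq\emptyset$, which yields the displayed conclusion of the theorem.

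The main obstacle is the non-emptiness of $Q_S$, that is, Step 5.1. Here the hypothesis ``for each $A$ some $R_A$ exists'' must be converted into a single $Q\in U_2$ that simultaneously satisfies $Q_A=p(R_A)$ for every $A\in A_S$. In the well-ordered case this passage was argued by contradiction from \eqref{contradict_QS_nonempty}, and nothing in that argument depends on the orders being total. My plan is therefore to reproduce that argument word for word, including its alternative version phrased via $U_1$ and restrictions $R'_A$, checking only that the membership conditions now refer to \eqref{PartOrdLeast}.

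This step deserves particular scrutiny. Assembling one relation per $A$ into a single $Q$ is exactly a uniform selection across the family, which is the point where any hidden appeal to choice would enter. If the by-contradiction argument turns out not to transfer cleanly, I would first try to produce the witness $Q$ explicitly as $\bigcup_{A\in A_S}p(R_A)$ and examine what is needed to make that union definable.
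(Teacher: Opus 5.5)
Your plan matches the paper, whose proof of this theorem is only the remark that the proof of Theorem~\ref{TCwo} applies verbatim. Your suspicion about Step 5.1 is correct: the claim $Q_S\neq\emptyset$ is a genuine gap, already in the well-ordered case, so there is nothing valid to transfer. Negating $Q_S\neq\emptyset$ gives $\forall Q\in U_2\,\exists A\in A_S\,\forall R_A\in\mathcal{R}_A\,(Q_A\neq p(R_A))$, and here the failing $A$ depends on $Q$. To contradict $\mathcal{R}_A\neq\emptyset$ you would need one $A$ at which \emph{every} candidate $Q$ fails, i.e.\ $\exists A\,\forall Q$, and $\forall Q\,\exists A$ does not give that. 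The observation that $p^{-1}(Q_A)$ ranges over all of $\mathcal{P}(A\times A)$ shows only that each coordinate can be filled separately, not that a single $Q$ fills all of them at once. The $U_1$ variant makes the same quantifier swap. Your fallback $Q=\bigcup_{A\in A_S}p(R_A)$ presupposes a function $A\mapsto R_A\in\mathcal{R}_A$, i.e.\ a choice function for $\{\mathcal{R}_A : A\in A_S\}$. That is exactly as strong as the goal: from it you read off least elements, and conversely a choice function $f$ on $A_S$ gives $R_A=\{(a,b)\in A\times A : a=b\lor a=f(A)\}$. All the choice content sits in $Q_S\neq\emptyset$, and Steps 6--6.2 merely unpack it. Separately, the $U_2$ you copy from Step 3.1 is the wrong ambient set. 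Every element of $\bigcup_{A}\mathcal{P}(P_A\times P_A)$ lives on a single $P_A$, while reflexivity forces $Q_A\neq\emptyset$ for every $A$. So $Q_S=\emptyset$ outright once $A_S$ has two members; you would need $\mathcal{P}(\bigcup_{A}P_A\times P_A)$ instead.

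No repair is possible, because the statement is not a theorem of ZF (assuming ZF is consistent). For nonempty $A$ and any $m\in A$, the relation $\{(a,b)\in A\times A : a=b\lor a=m\}$ is a partial order with least element $m$; this is the construction from Theorem~\ref{Tord}. So ZF proves $\mathcal{R}_A\neq\emptyset$ for every nonempty $A$. The hypothesis therefore adds nothing to ``nonempty family of nonempty sets'', and the statement is equivalent in ZF to \eqref{AC}, which Cohen's result (cited in the paper) shows is independent of ZF. The well-ordered case fails too: it is consistent with ZF that some countable family of two-element sets, each trivially well-orderable, has no choice function. This is the sanity check worth running whenever a hypothesis is weakened; here the weakening made it vacuous. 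What ZF does prove is the uniform version, in which a function $A\mapsto R_A\in\mathcal{R}_A$ on $A_S$ is given as data. Then $f_c(A)$, defined as the $R_A$-least element of $A$, is obtained by one application of separation, with no need for the two-stage construction.
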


This generalization significantly broadens the scope of sets for which we can prove choice based on ZF axioms alone, without assuming AC. For example, we can prove the existence of a choice function for any family of the non-empty intervals of real numbers, and then build a total order on them (although, of course, without AC we are unable to well-order such sets). In fact, we can do this even without \textbf{Theorem \ref{TCpo}}, using a purely ``constructive'' approach, by proposing the following choice function for any family of non-empty intervals over $\mathbb{R}$:
$$
    f_c(A) = a^{*} =
    \begin{cases}
        0, & \text{if } A = \mathbb{R}, \\
        \dfrac{a + b}{2}, & \text{if } A = [a,b] \lor A=(a,b) \lor A=(a,b] \lor A = [a,b), \\
        a - 1, & \text{if } A = (-\infty,a) \lor A = (-\infty,a], \\
        a + 1, & \text{if } A = (a,\infty) \lor A = [a,\infty).
    \end{cases}
$$

\textit{Note: It is essential to emphasize that $f_c(A)$ is undefined in all cases not explicitly specified above.}

Whenever the ``constructive" approach works, there should be an equivalent way of constructing a function of choice with a single axiom of separation. In this particular case, to formulate an axiom that effectively separates from $\mathcal{P}(\mathbb{R}) \times \mathbb{R}$ only those sets of pairs $(A,x)$ where $A \subseteq \mathbb{R}$ is an interval, $x \in A$, and there is one and only one pair $(A, x)$ for each $A$, we can propose the following well formulated formula $\varphi_2(A,x)$:
\[
\begin{split}
\varphi_2(A,x) &= (A \subseteq \mathbb{R}) \land \big( (A=\mathbb{R}) \land x=0\big) \lor \exists{a,b\in\mathbb{R}}\forall{y\in\mathbb{R}} \big[ 
\\ &\big( ((y \in A \iff y \le a) \lor (y \in A \iff y < a)) \land x = a-1 \big) \lor  
\\ &\big( ((y \in A \iff y \ge a) \lor (y \in A \iff y > a)) \land x = a+1 \big) \lor  
\\ &\big( ((y \in A \iff a \le y \le b) \lor (y \in A \iff a < y < b) ) \land x = \dfrac{a+b}{2} \big) \lor
\\ &\big( ((y \in A \iff a < y \le b) \lor (y \in A \iff a \le y < b)) \land x = \dfrac{a+b}{2} \big)
\big] \big)   
\end{split}
\]
It is evident that $\varphi_2(A,x)$ holds only if $A$ is a non-empty interval and $x$ belongs to this interval.

To apply \textbf{Theorem \ref{TCpo}}, we need to show that at least one suitable partial order exists. It is straightforward to construct such an order using the $a_*$ defined above as the least point:
$$ \forall{x,y \in A}\big( xRy \iff |x-a^*| \le |y - a^*| \land (|x-a^*| \ne |y - a^*| \lor x \le y \big).$$
This partial order can be generalized to hyper-intervals in $\mathbb{R}^n$ by using a least vector $a^*$, with each component defined according to the same formula.

This example shows that using the ``constructive'' approach is much simpler, as proving the existence of a choice function in this manner is equivalent to formulating a single axiom of separation. In contrast, our proof of \textbf{Theorem \ref{TCpo}} is based on two consecutive applications of the schema of separation, each resulting in its own axiom. However, our approach is much more flexible, as it remains applicable even when we cannot explicitly construct a specific choice function or a specific partial order with a least element---such as when we know from some properties that a suitable partial order exists over a given set, but we are unable to write down a well-formulated formula for its least element.

Another possible generalization would be to abolish any requirements for $R_A$ to be a transitive, reflexive and anti-symmetric relation. Essentially, we only need $R_A$ to have one property: 
\begin{equation}\label{genOrder}
\exists!{m\in A}\forall{b\in A}\big((m,b)\in R_A \big),    
\end{equation}
which implies $(m,m)\in R_A$ and $\forall{x\in A}\big((x,m)\in R_A\big) \implies x=m$. Consequently, we can generalize \eqref{TC} to the case when $\mathcal{R}_A$ is a non-empty set of arbitrary relations (not necessary orders) satisfying \eqref{genOrder}. However, we can prove that \eqref{genOrder} is equivalent to the existence of some partial order with a least element using the following theorem:

\begin{theorem}\label{Tord}
A partial order with a least element over a non-empty set $A$ exists if and only if there can be written a well-formulated formula that is true for one and only one element of $A$.
\begin{proof}
Suppose there exists $R$, a partial order with a least element over $A$, as defined by \eqref{PartOrdLeast}, then $\exists!{m \in A}\forall{b\in A}(m R b)$. We can write a well-formulated formula $\varphi_R(A,a) = \forall{b \in A}(a R b)$. It's clear that $\varphi_R(A,a) \iff a=m$.

Conversely, suppose we can write a well-formulated formula $\varphi(A,a)$ such that $\exists!{m \in A} \forall{a \in A}\big(\varphi(A,a) \iff a=m)$. Then, we can construct a set $R$ with an axiom of separation: $\forall{A}\exists{R}\forall{x}(x\in R \iff (x\in A \times A) \land (x_1 = x_2 \lor \varphi(A,x_1)) )$, where $x=(x_1, x_2)$. Let's prove that $R$ is a partial order over A:

\begin{itemize}
    \setlength{\itemsep}{0pt}
    \setlength{\parskip}{0pt}
    \setlength{\parsep}{0pt}
    \item $aRa \iff (a = a \lor \varphi(A,a))$, which is always true ,
    \item $aRb \land bRa \iff (a=b \lor \varphi(A,a))\land(b=a \lor \varphi(A,b)) \iff (a=b) \lor (\varphi(A,a) \land \varphi(A,b)) \implies a=b$,
    \item suppose $aRb \land bRc$: if $a=b$, then $aRc$; if $b=c$, then $aRc$ as well; otherwise, $a \ne b \land b \ne c$, but since $aRb \land bRc \iff (a=b \lor \varphi(A,a))\land(b=c \lor \varphi(A,b))$, here it reduces to $aRb \land bRc \iff \varphi(A,a) \land \varphi(A,b)$, which is impossible when $a \ne b$; thus, in all feasible cases $aRb \land bRc \implies aRc$.
\end{itemize}

Finally, $\exists{m\in A}(\varphi(A,m)) \implies \exists{m\in A}\forall{b\in A}(mRb))$.
\end{proof}
\end{theorem}

\textbf{Corollary 1.} \textit{A partial order with a least element over set $A$ exists if and only if there exists a function that maps $A$ to one of its elements.}

\textbf{Corollary 2.} \textit{A partial order with a least element exists over any topological space that has a single point as its deformation retract, which includes any path-connected or contractible space.}

By combining \textbf{Theorem \ref{TCpo}} with these results, we can claim that the existence of a function of choice could be proved from ZF axioms alone, without any kind of AC, even in cases when a ``constructible" approach fails---i.e., when we cannot explicitly specify a canonical or definable element for a particular set in a manner similar to what we used for the intervals of $\mathbb{R}$ discussed earlier. 

For example, consider a family $\mathcal{F}_X$ of arbitrary contractible topological spaces. We cannot take any point as a canonical element to construct an apparent choice function. However, we know that there is a constant map $f_p:X \rightarrow \{x_0\}$, with $x_0\in X$, for every space $X\in \mathcal{F}_X$. Therefore, there exists a partial order with a least element $R_X$ such that $(x,y)\in R_X \iff (x,y)\in X\times X \land (x=y \lor x=f_p(X))$. Using \textbf{Theorem \ref{TCpo}}, we can then claim the existence of a choice function on any such family $\mathcal{F}_X$ without assuming AC. This reasoning can be extended from families of contractible spaces to families of path-connected spaces. In particular, we can prove, using ZF alone, the existence of a function of choice for any family of hyperballs and hyperspheres, which, unlike hyperintervals, was not achievable with a ``constructive" approach.

\section{Necessary and Sufficient Conditions for Proving Choice in ZF without AC}

In the previous sections we proved that for any family of partially ordered sets with a least element there exists a function of choice; this provides a sufficient condition for proving choice in ZF without AC. We also demonstrated that a partial order with a least element over a set exists if and only if there is a function that maps that set into one of its elements. Combining these results, we can now establish a necessary condition for proving choice in pure ZF: without assuming AC, choice can be proved only for the families of partially ordered sets with a least element.

\begin{theorem}\label{Tcond}
In ZF without AC, a function of choice $f_c$ as defined by \eqref{AC} exists for a non-empty family of sets $A_S$ if and only if for each $A \in A_S$, $A \ne \emptyset$ and there exists a partial order with a least element over $A$.
\begin{proof}
Suppose that $A_S \ne \emptyset \land \forall{A \in A_S}[A \ne \emptyset \land \mathcal{R}_A \ne \emptyset]$, where every $R_A \in \mathcal{R}_A$ is a partial order with a least element over A. By \textbf{Theorem \ref{TCpo}}, in this case, a function of choice $f_c$ exists for $A_S$ .

Conversely, suppose that a function of choice $f_c$ exists for $A_S$, i.e. $$\exists {f_c \in \mathcal{P}(A_S \times \bigcup_{A \in A_S}{A} ) } \big[ \forall{A \in A_S} \exists! {a \in A} \big((A,a) \in f_c\big) \big].$$ From this, it immediately follows that $\forall{A \in A_S}(A \ne \emptyset)$. While multiple such functions $f_c$ may exist, for any of them we can write a well-formulated formula $\varphi_{f_c}(A,x)=(x=f_c(A))$. It is clear that, given any $f_c$ (and we know that at least one such function exists), this formula is true for one and only one element of every $A \in A_S$. Then, applying \textbf{Theorem \ref{Tord}}, we can conclude that there exists a partial order with a least element over every $A \in A_S$, and the least element in that particular case is exactly $f_c(A)$.

Alternatively, if this approach seems insufficiently rigorous and appears to rely on picking a particular $f_c$, we can apply the schema of separation as follows:
$$ \forall{f_c}\forall{A}\exists{\mathcal{R}_A}\forall{R}\big( R \in \mathcal{R}_A \iff R \in \mathcal{P}(A \times A) \land \varphi_3(R,A,f_c) \big),$$
where
$$\varphi_3(R,A,f_c) = \forall{a,b \in A}\big((a,b)\in R \iff a=b\lor a=f_c(A)\big)$$

Using the same reasoning as in the proof of \textbf{Theorem \ref{Tord}}, we can show that every $R \in \mathcal{R}_A$ is a partial order with a least element over $A$. 

Thus, it only remains to prove that $\mathcal{R}_A \ne \emptyset$ or, equivalently, that $\varphi_3(R,A,f_c)$ is not false for every possible $R \in \mathcal{P}(A \times A)$. To do this, let's consider two binary relations over a given $A$. First, the identity function, $id_A = \big\{ (a,a) | a \in A \big\}$---it is clear that $id_A \subset A \times A$ and, since $A\ne\emptyset$, we have $id_A\ne\emptyset$. Second, a binary relation generated on a given set $A$ using a given function of choice $f_c$, $f^{-1}_{A,f_c} = \big\{ (a,b) | a=f_c(A) \land b \in A \big\}$---it is clear that $f^{-1}_{A,f_c} \subset A \times A$ and, since $\exists{f_c}\land A\ne\emptyset$, we have $f^{-1}_{A,f_c}\ne\emptyset$. Now, for any $A\in A_S$ and any function of choice $f_c$, there exists a union $(id_A \cup f^{-1}_{A,f_c}) \subset A \times A$, and it is quite evident that $\varphi_3(id_A \cup f_{A,f_c},A,f_c)$ is true. Therefore, $\mathcal{R}_A \ne \emptyset$.
\end{proof}
\end{theorem}

\section{Conclusion}

In this paper, we developed a non-constructive approach to proving the existence of a function of choice for specific families of sets. The key difference between our method and existing ``constructive" approaches is that we do not rely on specifying the choice function explicitly, as was done in \cite{Hrbacek1999} to show that choice can be proved for any well-ordered set. Their approach inevitably requires assuming a particular property for the given sets or canonical (definable) elements in order to write down a formula for the particular function of choice. Our approach takes an entirely different path: instead of explicitly specifying some function of choice, we use the axiom schema of separation to separate a subset of all possible choice functions from the set of all binary relations, and then prove that subset is non-empty. Consequently, our approach works even in situations where it is impossible to specify a canonical element in each set or identify a specific property to construct a function of choice.

Although our initial focus was on families of well-ordered sets, and we developed a non-constructive alternative to existing ``constructive" proofs, it quickly became clear that our approach can be applied in a more general context. We attempted to relax some of the assumptions implied by the well-ordering property. In fact, our method does not depend on any assumptions about the subsets of the given sets, nor does it rely on transitivity or totality. This naturally extends our results to families of partial ordered sets with a least element. Next, we tried to investigate whether we could prove choice given families of unordered sets, and found that our approach would require every set to have a definable element, but it can be proved that in this case a partial order with a least element can be constructed over such set. Based on this, we established a necessary and sufficient condition for proving choice in ZF without AC: a function of choice exists for a non-empty family if any only if there is a partial order with a least element over every set in that family.

Finally, some applications of our results were examined. We showed that choice can be proved in ZF for any family of non-empty intervals in $\mathbb{R}$: first, by explicitly constructing a particular function of choice (effectively, using a ``constructible" approach); second, by demonstrating how a partial order with a least element can be built, allowing us to apply our theorem to prove choice for this family. The same reasoning can be extended to families of non-empty hyper-intervals in $\mathbb{R}^n$. We also analyzed more tricky cases involving families of hyper-balls, hyper-spheres, and even arbitrary contractible and path-connected topological spaces. While the ``constructive" approach appears to fail in these cases, our method can prove the existence of choice function for such families in pure ZF, without assuming AC. And, as a further consequence, since any countable set is well-ordered, we can also claim that the countability of a union of countably many countable sets can be proved in ZF without AC.

\section*{Declarations}

\paragraph{Funding and Competing interests.} The author did not receive support from any organization for the submitted work. The author has no financial or proprietary interests in any material discussed in this article.

\paragraph{AI Usage.} DeepAI (deepai.org) with Standard model was used solely for grammar correction, language editing, and validation of formulas and proofs that had previously been developed by the author. All scientific content, interpretations, and conclusions were generated by the author. The final text was reviewed and verified by the author.

\bibliography{main}

\end{document}